\newcommand{\xleftrightarrows}[2][]{\mathrel{%
 \raise.40ex\hbox{$  
       \ext@arrow 3095\leftarrowfill@{\phantom{#1}}{#2}$}%
 \setbox0=\hbox{$\ext@arrow 0359\rightarrowfill@{#1}{\phantom{#2}}$}%
 \kern-\wd0 \lower.40ex\box0}}  
\def\leftrightarrowfill@{
 \arrowfill@\leftarrow\relbFwar\rightarrow%
}  
     \newtheorem{theorem}{Theorem}[section]
\newtheorem{lemma}[theorem]{Lemma}
\newenvironment{proof}[1][Proof]{\begin{trivlist}
\item[\hskip \labelsep {\bfseries #1}]}{\end{trivlist}}
\newenvironment{remark}[1][Remark]{\begin{trivlist}
\item[\hskip \labelsep {\bfseries #1}]}{\end{trivlist}}
\newcommand{\qed}{\nobreak \ifvmode \relax \else
      \ifdim\lastskip<1.5em \hskip-\lastskip
      \hskip1.5em plus0em minus0.5em \fi \nobreak
      \vrule height0.75em width0.5em depth0.25em\fi}
\def\beq{\begin{eqnarray}}
\def\eeq{\end{eqnarray}}
\begin{document}

\mbox{}
\thispagestyle{empty}
 \renewcommand{\thesection}{\arabic{section}}
\renewcommand{\thesubsection}{\arabic{subsection}}
\begin{flushleft}
{\Large
\textbf{Dynamics and bifurcations in a simple quasispecies model of tumorigenesis}
}
\vspace{0.3cm}
\\
Vanessa Castillo$^{1}$,
J. Tom\'as L\'azaro$^{1,\ast}$
Josep Sardany\'es$^{2,3,\ast}$, 
\vspace{0.1cm}
\\
\small{
1. Universitat Polit\`ecnica de Catalunya, Barcelona, Spain
\\
2. ICREA-Complex Systems Lab, Department of Experimental and Health Sciences, Universitat Pompeu Fabra, Dr. Aiguader 88, 08003 Barcelona, Spain
\\
3. Institut de Biologia Evolutiva (CSIC-Universitat Pompeu Fabra), Passeig Maritim de la Barceloneta 37, 08003 Barcelona, Spain
\\
$\ast$ Corresponding authors. E-mail: jose.tomas.lazaro@upc.edu; josep.sardanes@upf.edu.}
\end{flushleft}

\begin{center}
\section*{Abstract}
\end{center}
Cancer is a complex disease and thus is complicated to model. However, simple models that describe the main processes involved in tumoral dynamics, e.g., competition and mutation, can give us clues about cancer behaviour, at least qualitatively, also allowing us to make predictions. Here we analyze a simplified quasispecies mathematical model given by differential equations describing the time behaviour of tumor cells populations with different levels of genomic instability. We find the equilibrium points, also characterizing their stability and bifurcations focusing on replication and mutation rates. We identify a transcritical bifurcation at increasing mutation rates of the tumor cells population. Such a bifurcation involves an scenario with dominance of healthy cells and impairment of tumor populations. Finally, we characterize the transient times for this scenario, showing that a slight increase beyond the critical mutation rate may be enough to have a fast response towards the desired state (i.e., low tumor populations) during directed mutagenic therapies.
\vspace*{1cm}

\section{Introduction}
Cancer progression is commonly  viewed  as a cellular microevolutionary process \cite{CAIRNS1975,MERLO2006}.  Genomic instability,  which seems to be a common trait  in most types  of cancer  \cite{CAHILL1999},  is a key factor responsible for tumor progression since allows a Darwinian exploratory process required to overcome selection barriers. By displaying either high levels of mutation or chromosomal aberrations, cancer cells can generate a progeny of highly diverse phenotypes able to evade such barriers \cite{LOEB2001}. Genomic instability refers to an increased tendency of alterations in the genome during the life cycle of cells. Normal cells display a very low rate of mutation ($1.4 \times 10^{-10}$ changes per nucleotide and replication cycle). Hence, it has been proposed that the spontaneous mutation rate in normal cells is not sufficient to account for the large number of mutations found in human cancers. Indeed, studies of mutation frequencies in microbial populations, in both experimentally induced stress and clinical cases, reveal that mutations that inactivate mismatch repair genes result in $10^2 - 10^3$ times the background mutation rate \cite{OLIVER2000,BJEDOV2003,SNIE1997}. Also, unstable tumors exhibiting the so-called mutator phenotype \cite{LOEB2001} have rates that are at least two orders of magnitude higher than in normal cells \cite{ANDERSON2001, BIELAS2006}. This difference leads to cumulative mutations and increased levels of genetic change associated to  further failures in genome  maintenance  mechanisms \cite{Hoeijmakers}.  The  amount  of instability  is, however,  limited by  too high levels of instability, which have been suggested to exist in tumor progression \cite{CAHILL1999}, thus indicating that thresholds for  instability must exist.  In fact, many anti-cancer therapies  take indirectly advantage  of increased genomic instability, as is the case  of mitotic spindle alteration by taxol or DNA damage by radiation or by alkilating agents.

The mutator phenotype is the result of mutations in genes which are responsible of preserving genomic stability, e.g., BRCA1 (breast cancer 1), BLM (bloom syndrome protein), ATM (ataxia telangiectasia mutated) or gene protein P53 (which is involved in multitude of cellular pathways involved in cell cycle control, DNA repair, among others). This mutator phenotype undergoes increases in mutation rates and can accelerate genetic evolution in cancer cells that can ultimately drive to tumor progression \cite{LOEB2001}. As mentioned, genomic instability is a major driving force in tumorigenesis. Tumorigenesis can be viewed as a process of cellular microevolution in which individual preneoplastic or tumor cells acquire mutations that can increase proliferative capacity and thus confer a selective advantage in terms of growth speed. The rate of replication of tumor cells can increase due to mutations in both tumor suppressor genes, e.g., APC (adenomatous polyposis coli) or P53; and oncogenes, e.g., RAS (rat sarcoma) or SRC. Tumor suppressor genes protect cells from one step on the path to cancer and oncogenes are genes that, when mutated, have the potential to cause cancer. In terms of population dynamics, alterations in both types of genes drive to neoplastic process through increases in cancer cells numbers. Mutations in replication-related genes that confer an increase of fitness and thus a selective advantage are named driver mutations \cite{BENEDIKT2014}. This evolutionary process allows tumor cells to escape the restrictions that limit growth of normal cells, such as the constraints imposed by the immune system, adverse metabolic conditions or cell cycle checkpoints. 

The iterative process of mutation and selection underlying tumor growth and evolution promotes the generation of a diverse pool of tumor cells carrying different mutations and chromosomal abnormalities. In this sense, it has been suggested that the high mutational capacity of tumor cells, together with an increase of proliferation rates, may generate a highly diverse population of tumor cells similar to a quasispecies \cite{SOLE2001,BRUMER2006}. A quasispecies is a ``cloud" of genetically related genomes around the so-called master sequence, which is at the mutation-selection equilibrium \cite{Eigen1971,Eigen1979}. Due to the heterogeneous population structure, selection does not act on a single mutant but on the quasispecies as a whole. The most prominent examples of a quasispecies are given by RNA viruses (e.g. Hepatitis C virus \cite{SOLE2006}, vesicular stomatitis virus \cite{Marcus1998}, the human immunodeficiency virus type 1 \cite{CICHUTEK1992}, among others). 

An important concept in quasispecies theory is the so-called error threshold \cite{Eigen1971,Eigen1979}. The error threshold is a phenomenon that involves the loss of information at high mutation rates. According to Eigen's original formulation, a quasispecies can remain at equilibrium despite high mutation rates, but the surpass of the critical mutation rate will upset this balance since the master sequence itself disappears and its genetic information is lost due to the accumulation of errors. It has been suggested that many RNA viruses replicate near the error threshold \cite{DOMINGO2001}. Another important concept in quasispecies theory is lethal mutagenesis. As a difference from the error threshold (which is a shift in sequence space), lethal mutagenesis is a process of demographic extinction due to an unbearable number of mutations \cite{BULL2007,WYLIE2012}. Most basically, it requires that deleterious mutations are happening often enough that the population cannot maintain itself, but it is otherwise no different from any other extinction process in which fitness is not great enough for one generation of individuals to fully replace themselves in the next generation. In simple words, increased mutagenesis could impair the maintainance of a quasispecies due to the crossing of the error threshold or due to lethal mutagenesis. 

Quasispecies theory has provided a population-based framework for understanding RNA viral evolution \cite{LAURING2010}. These viruses replicate at extremely high mutation rates and exhibit significant genetic diversity. This diversity allows a viral population to rapidly adapt to dynamic environments and evolve resistance to vaccines and antiviral drugs. As we previously mentioned, several features have been suggested to be shared between RNA viruses and tumors \cite{SARD}, at least qualitatively. One is the presence of high levels of heterogeneity, both at genotype and phenotype levels. Typically, cancer cells suffer mutations affecting cell communication, growth and apoptosis (i.e., programmed cell death). Accordingly, escape from the immune system (and other selection barriers) operates in both RNA viruses and tumors. Viruses use antigenic diversity whereas tumors evade the immune system by loosing their antigens through mutation, or making use of antigenic modulation and/or tumor-induced immune suppression. Even more, similarly to RNA viruses, genetic instability in cancer cells will have detrimental effects on cells fitness, since most random mutations are likely to be harmful. As indicated by Cahill et al. \cite{CAHILL1999}, the best chance of cure advanced cancers might be a result of tumor genetic instability: cancer cells are more sensitive to stress-inducing agents. In this sense, possible therapies increasing mutation of tumor cells could push this populations towards the error threshold or induce lethal mutagenesis. This is the topic that we will address in this work by using a mathematical model describing the dynamics of competition between different cell populations with different levels of genomic instability. Specifically, we will find equilibrium points, characterizing their stability, focusing on possible changes in the stability as a function of the mutation rate and the fitness of cells populations. We will also investigate the transient times of the system to reach a given equilibrium state. 

Each of the mathematical results will be given in terms of the mutation rates or the replication fidelity of the cells. This is important to understand how tumor populations behave. As we previously mentioned, unstable cells may reach an error threshold and start loosing genetic information until its population decreases or, even more, disappears. We will see how mutation rates affect this error threshold. Furthermore, we will see how they affect to the velocity of the tumor population to reach an equilibrium point, which is interesting since mutation rates can be modified through drugs (similarly to RNA viruses \cite{CROTTY2001}) or radiotherapy.

\section{The model}
\label{model}
In this section we introduce the model by Sol\'e and Deisboeck \cite{SOLE2004} , which describes the competitive behaviour between cell populations with different levels of genomic instability. The model is given by the following set of differential equations:

\begin{figure}[!ht]
\centering
\includegraphics[scale=0.5]{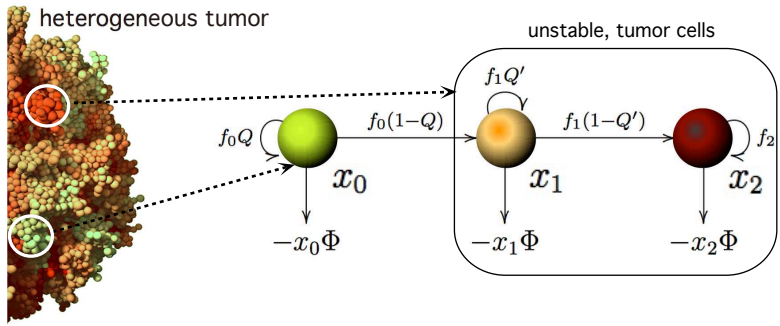}
\caption{ Schematic diagram of the investigated system. We consider a simple model of tumor growth describing the dynamics of competition between different cell populations with different levels of genomic instability. Three different cell populations are considered: $x_0$ represents anomalous growth but low genomic instability, and both $x_1$ and $x_2$ have larger genomic instabilities (see section \ref{model} for a description of the model parameters).}
\label{diagram}
\end{figure}

\begin{eqnarray}
\left\lbrace\begin{array}{lll}
\dot{x_0} &=& f_0Qx_0 - \Phi(x_0,x_1,x_2)x_0, \nonumber\\[5pt]
\dot{x_1}& =& f_0(1-Q)x_0 + f_1Q'x_1 -\Phi(x_0,x_1,x_2)x_1, \\[5pt]
\dot{x}_2^i &=  &f_1(1-Q')q'_ix_1 + \sum_{j=1}^{n}f_2^j\mu_{ij} x_2^j - 	\Phi(x_0,x_1,x_2)x_2^i.\nonumber 
\end{array}\right.\label{eq1}
\end{eqnarray}

\bigskip

\noindent Variable $x_0$ is the fraction of cells with anomalous growth but no genetic instability; $x_1$ is the fraction of cells derived from $x_0$ by mutation that allows genetic instability; and $x_2^i$ for $i=1,2...,n$, is the fraction of tumor mutant cells that can be generated from $x_1$ due to mutation (see Fig.~\ref{diagram}). Thus, $x_0$, $x_1$ and $x_2^i$ denote the fraction of each population and, therefore, the sum of all these variables must be exactly one, i.e., $x_0 + x_1+ \sum_{i=1}^n x_2^i = 1$. Moreover, the probability of mutation from $x_1$ to $x_2^i$ can be denoted by $\mu_{x_{1} \rightarrow x_2^i}$ and is given by $(1-Q')q'_i$. Notice that $\sum_{i=1}^n q'_i = 1$. Coefficients $\mu_{ij}$ denote the mutation rate (or probability) from $x_2^j$ to $x_2^i$. The probability of error-free replication of $x_2^i$ is represented by $\mu_{ii}$. Additionally, cross-mutations connect the different subclones of $x_2$ through the term $\sum_{j = 1}^n f_2^j \mu_{ij}x_2^j$, being $f_j$ the growth rate of each population and $f_2^i$ the growth rate of each subpopulation of $x_2$. These $f_j$ and $f_2^i$ are known as the fitness of each population. The greater is the fitness of a population, the greater is its replicative rate. Finally, the term $\Phi(\textbf{x})$ is the average fitness of the population vector $\textbf{x} = (x_0, x_1, x_2^1, x_2^2,...,x_2^n)$, i.e., $\Phi(\textbf{x}) = f_0x_0+f_1x_1+\sum_{i=1}^n f_2^ix_2^i$.  $\Phi(\textbf{x})$ is also known as the ``constant population constraint" and it ensures that the population remains constant, also introducing competition between the three populations of cells. Although the model does not explicitly consider environmental constraints, such as blood supply, hypoxia or acidosis, they can be considered as implicitly introduced through the set $\{f_j\}_{j=0,1,2}$ \cite{SOLE2001}. Population $x_0$ mutates to $x_1$ with a probability of $\mu_0=1-Q$. In the same way, $x_1$ mutates in different $x_2^i$-sequences with a probability $\mu_1=1-Q'$. In both cases, $0<Q,Q'<1$, being $Q$ and $Q'$ the copying fidelity during replication for $x_0$ and $x_1$ respectively. So, in this model, mutations from $x_1$ to $x_0$ and from $x_2^i$ to $x_1$ have not been considered. It is also worth noticing that the model with $Q'=1$ is the two-variable quasispecies model \cite{SWETINA1982}.\\ 
The set of equations (\ref{eq1}) can also be written in a matricial way:

\begin{eqnarray}
\begin{pmatrix}
\dot{x_0} \\
\dot{x_1} \\
\dot{x}_2^1\\
\vdots \\
\dot{x}_2^n
\end{pmatrix}
= \left(
\begin{array}{cc}
\begin{array}{cc}
f_0 Q & 0 \\
f_0(1-Q) & f_1Q'\\
0 & f_1(1-Q')q'_1\\
\vdots & \vdots\\
0 &   f_1(1-Q')q'_n
\end{array} &
\begin{array}{c}
\begin{array}{ccc}
0 & \cdots & 0 \\
0 & \cdots & 0 \\
\end{array}\\
\begin{array}{|ccc|}
\hline
&  &\\
& M_{\mu}D_{f_2} & \\
& & \\
\hline
\end{array}
\end{array}
\end{array}\right)
\begin{pmatrix}
x_0 \\
x_1 \\
x_2^1\\
\vdots \\
x_2^n
\end{pmatrix}
-
\Phi(\textbf{x})
\begin{pmatrix}
x_0 \\
x_1 \\
x_2^1\\
\vdots \\
x_2^n
\end{pmatrix},
\end{eqnarray}

\bigskip
where $M_{\mu} = 
\begin{pmatrix}
\mu_{11} & \cdots & \mu_{1n}\\
\vdots &  & \vdots\\
\mu_{n1} & \cdots & \mu_{nn}
\end{pmatrix}
$ and $
D_{f_2} = 
\begin{pmatrix}
f_2^1 & & \\
 & \ddots & \\
 & & f_2^n
\end{pmatrix}.$\\
\\
So, if we set $M$ and $D_f$ as the following matrices
\begin{eqnarray*}
M = \left(
\begin{array}{cc}
\begin{array}{cc}
Q & 0 \\
1-Q & Q'\\
0 & (1-Q')q'_1\\
\vdots & \vdots\\
0 &   (1-Q')q'_n
\end{array} &
\begin{array}{c}
\begin{array}{ccc}
0 & \cdots & 0 \\
0 & \cdots & 0 \\
\end{array}\\
\begin{array}{|ccc|}
\hline
&  &\\
& M_{\mu} & \\
& & \\
\hline
\end{array}
\end{array}
\end{array}\right) 
, \phantom{xx} D_f = 
\begin{pmatrix}
f_0 & & & &\\
 & f_1 & & & \\
 & & f_2^1 & & \\
 & & & \ddots & \\
 & & & & f_2^n
\end{pmatrix},
\end{eqnarray*}\\
then $M$ is a Markov matrix by columns. This means that $\sum_{i=1}^{n+2}M_{ij} = 1$ $\forall j=1\div n+2$. This kind of matrix appears in mathematical models in biology \cite{USHER1971}, economics (e.g., the Markov Switching Multifractal asset pricing model \cite{LUX2011}), telephone networks (the Viterbi algorithm for error corrections \cite{FORNEY1973}) or "rankings" as the PageRank algorithm from Google \cite{SERGEY1998,BRYAN2006}. Therefore, the system can be rewritten as: 
\begin{eqnarray}
\dot{\textbf{x}} = MD_f\textbf{x}-\Phi(\textbf{x})\textbf{x}.
\end{eqnarray}
Our approach to this problem consists on assuming $\{x_2^i\}$ behaves as an average variable $x_2 = \sum_{i=1}^n x_2^i$ \cite{SOLE2004}. As a consequence, only two different mutation rates are involved in such simplified system: $\mu_0 = 1-Q$ and $\mu_1 =1- Q'$. Hence, the set of equations is given by:
\begin{eqnarray}
\left\{
\begin{array}{lll}
\dot{x_0} = f_0Qx_0- \Phi(x_0,x_1,x_2)x_0,

\\ \dot{x_1} = f_0(1-Q)x_0 + f_1Q'x_1 - \Phi(x_0,x_1,x_2)x_1,

\\ \dot{x_2} =  f_1(1-Q')x_1 + f_2x_2 - \Phi(x_0,x_1,x_2)x_2.
\end{array}
\right. \label{eqS}
\end{eqnarray}
In this simplified system, the effect of mutations can be represented by means of a directed graph as shown in Fig.~\ref{diagram}. From now on, let us assume that we always start with a population entirely composed by stable cells, i.e., $x_0(0) = 1$ and $x_1 (0) = x_2 (0) = 0$. Notice that for this particular case every point of the trajectory always verifies $x_0 + x_1 +  x_2 = 1$.

The model by Sol\'e and Deisboeck \cite{SOLE2004} considered $f_2 = \alpha f_1 \phi (\mu_1)$, where $\phi (\mu_1)$ was a decreasing function such that $\phi(0) = 1$, indicating that the speed of fitness decays as mutation increases, and where $\alpha$ was a competition term. The dependence of $f_2$ on $f_1$ and $\mu_1$ was introduced to take into account the deleterious effects of high genetic instability on the fitness of the unstable population $x_2$. In our work we will consider $f_2$ as a constant, being independent of $f_1$ and $\mu_1$. This assumption allows the analysis of a more general scenario where population $x_1$ can produce $x_2$ by mutation, and where $x_2$ can have different fitness properties depending on the selected values of both $f_1$ and $f_2$. For instance, we can analyze the case of deleterious mutations from $x_1$ to $x_2$ with $f_2 < f_1$, the case of neutral mutations from $x_1$ to $x_2$ when $f_2 = f_1$, or the case of increased replication of $x_2$ with $f_2 > f_1$. This latter case would correspond to mutations in driver genes that might confer a selective advantage to the tumor populations $x_2$.

\section{Results and discussion}

\subsection{Equilibrium points}
\label{se:eqpoints}

The system (\ref{eqS}) has three different fixed points. Namely, a first one showing the total dominance of the unstable population, i.e., $x_0$ and $x_1$ go extinct and then $x_2 = 1$. A second one where $x_0$ goes extinct and $x_1$ coexists with $x_2$, and a third possible fixed point, where the three populations coexist. We can not consider a fixed point with $x_0\neq0$, $x_2\neq0$ and $x_1 =0$ because, as shown in Fig.~\ref{diagram}, cells only mutate in one direction and it is not possible to have such scenario. Furthermore, the trivial equilibrium $x_0 = x_1 = x_2 = 0$ is never achieved because $x_0+x_1+x_2=1$.

Let us seek for these three possible fixed points. Then we have to find the solutions of the following system:
\begin{figure}[!ht]
\centering
\includegraphics[scale=0.75]{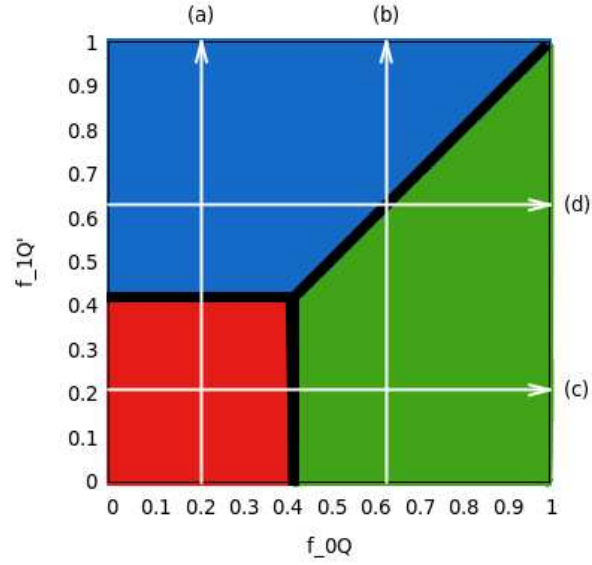}
\caption{Analysis of the parameter space $(f_0 Q, f_1 Q')$ to find the fixed points. Each colored region indicates a different fixed point (smaller red square: $(0,0,1)$; blue upper region: $(0,x_1^*,x_2^*)$; and green region at the right: $(x_0^*,x_1^*,x_2^*)$). The thick black lines indicate that the system does not reach any fixed point. The white arrows are different sections that are studied along this work.}
\label{fixpoints}
\end{figure}

\begin{eqnarray}
\begin{pmatrix}
0 \\
0 \\
0
\end{pmatrix}
=
\begin{pmatrix}
f_0Q & 0 & 0 \\
f_0(1-Q) & f_1Q' & 0\\
0 & f_1(1-Q') & f_2
\end{pmatrix}
\begin{pmatrix}
x_0 \\
x_1 \\
x_2
\end{pmatrix}
-
\Phi(\textbf{x})
\begin{pmatrix}
x_0 \\
x_1 \\
x_2
\end{pmatrix},\label{linearSys}
\end{eqnarray}

\begin{itemize}
\item If we consider $x_0 = x_1 = 0$ and $x_2 \neq 0$, from the system (\ref{linearSys}) we obtain $f_2x_2 - \Phi(x_0,x_1,x_2)x_2 = 0$, which has two possible solutions $x_2 = 0$ or $x_2 = 1$. As we require the sum of the three variables to be 1, then the solution is $(x_0^*,x_1^*,x_2^*)=(0,0,1)$.

\item Let us consider now $x_0 = 0$ and then solve the obtained system of equations:
\begin{eqnarray}
\left\{
	\begin{array}{lll}
	f_1Q' - f_1x_1 - f_2x_2 = 0, \\[5pt]
f_1(1-Q')x_1 + f_2x_2 - f_1x_1x_2 -f_2x_2^2 = 0.
	\end{array}
	\right.
\end{eqnarray}

Its solution is $x_1^* = \frac{f_1Q'-f_2}{f_1-f_2}$ and $x_2^* = \frac{f_1-f_1Q'}{f_1-f_2}$. Notice that, as $x_1$ and $x_2$ are ratios of population, $x_1^*$ and $x_2^*$ must take values from $0$ to $1$. Then, from their expressions and remembering that $0<Q'<1$, we get the conditions: $f_1>f_2$ and $f_1Q'>f_2$.

\item Finally, for the last fixed point we consider $x_0 \neq 0, x_1 \neq 0, x_2 \neq 0$. Then, from the first equation of the system we have $\Phi(x_0,x_1,x_2) = f_0Q$. Hence the new system has this form:
\begin{eqnarray}
 \left\{
\begin{array}{llr}
f_0(1-Q)x_0 + (f_1Q'-f_0Q)x_1 =0,\\[5pt]
f_1(1-Q')x_1 + (f_2-f_0Q)x_2 = 0,\\[5pt]
\end{array}
\right.
\end{eqnarray}
\begin{figure}[!ht]
\begin{minipage}[t]{0.35\textwidth}
\includegraphics[width=\textwidth]{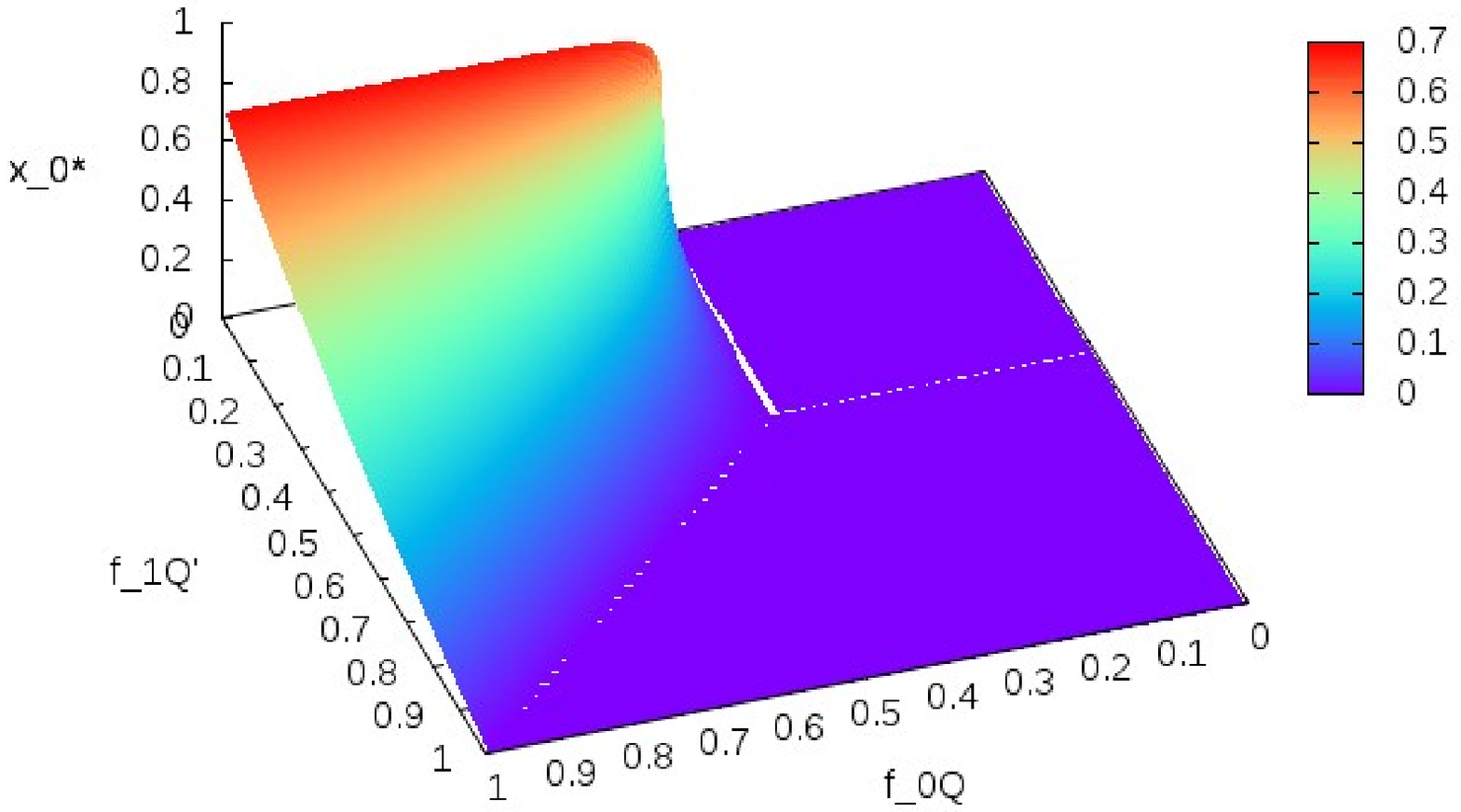}
\end{minipage}
\begin{minipage}[t]{0.35\textwidth}
\includegraphics[width=\textwidth]{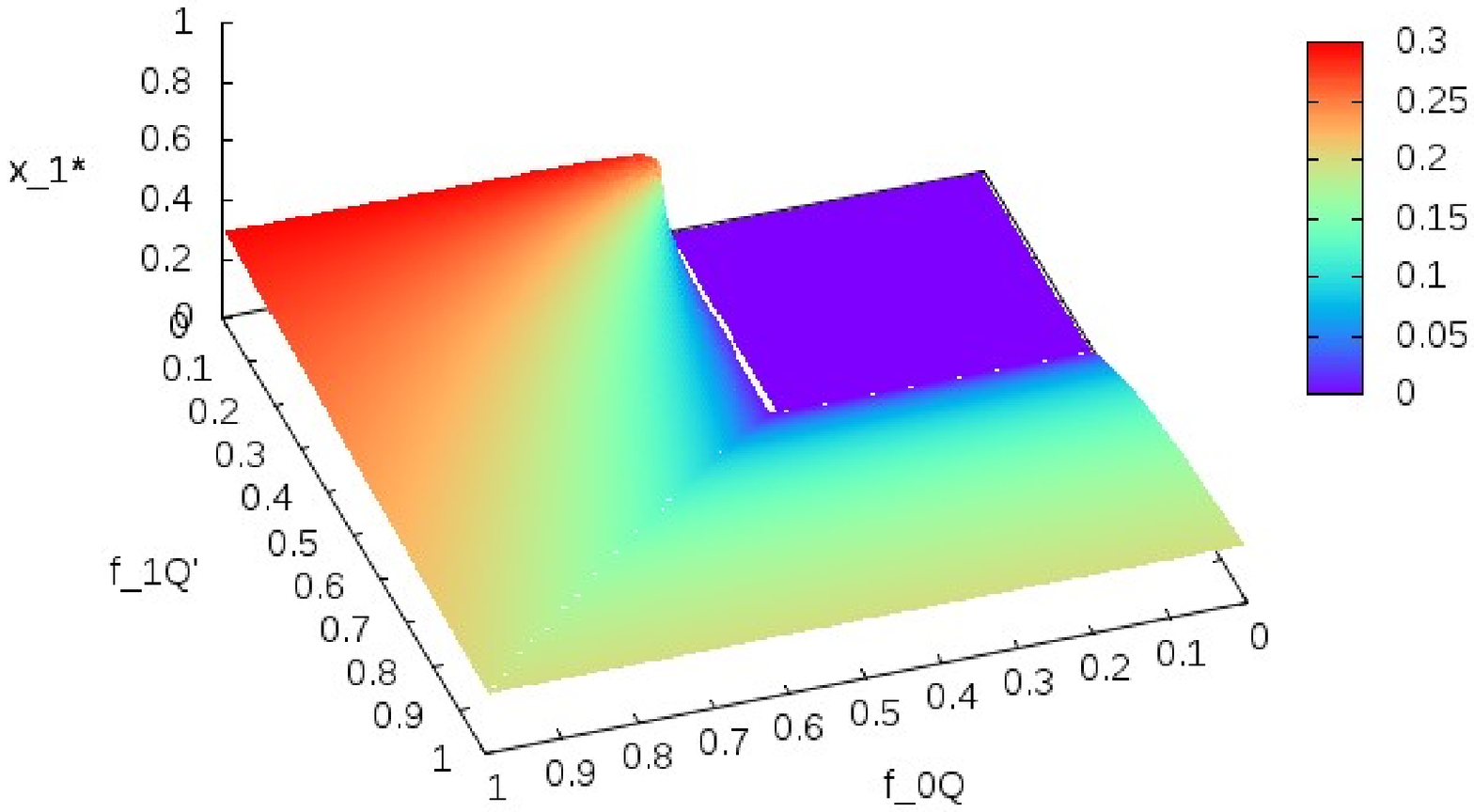}
\end{minipage}
\begin{minipage}[t]{0.35\textwidth}
\includegraphics[width=\textwidth]{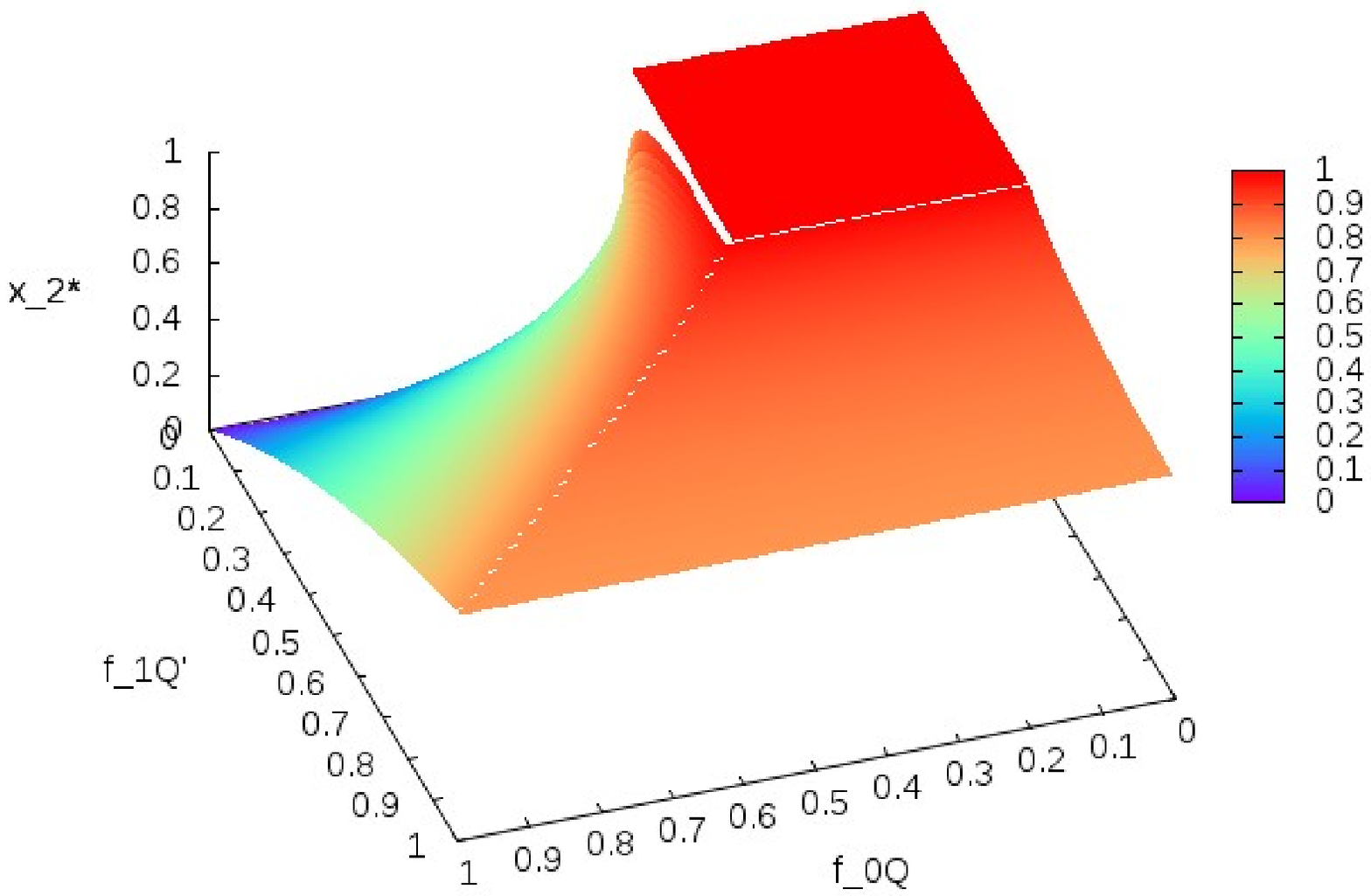}
\end{minipage}
\caption{Equilibrium concentration of each variable in the parameter space ($f_0Q$,$f_1Q'$). We display the equilibrium value for $x_0$ (left panel), $x_1$ (middle panel) and $x_2$ (right panel). Notice that the color bar for each panel is not normalized.}
\label{3d_fix}
\end{figure}

under the constraint $x_0 + x_1 + x_2 = 1$. Then its solution is as follows:
\begin{eqnarray*}
x_0^* = \frac{(f_0Q -f_1Q')(f_0Q-f_2)}{\varphi(f_0,f_1,f_2,Q,Q')},
\end{eqnarray*}
\begin{eqnarray*}
x_1^*= \frac{f_0(1-Q)(f_0Q-f_2)}{\varphi(f_0,f_1,f_2,Q,Q')},
\end{eqnarray*}
and
\begin{eqnarray*}
x_2^*=\frac{f_1(1-Q')f_0(1-Q)}{\varphi(f_0,f_1,f_2,Q,Q')}.
\end{eqnarray*}
where $$\varphi(f_0,f_1,f_2,Q,Q')= (f_0Q -f_1Q')(f_0Q-f_2)+f_0(1-Q)(f_0Q-f_2)+(f_1(1-Q')f_0(1-Q).$$
\end{itemize}

\begin{remark}{{\bf{3.1.}}}
\label{f1f2}
We can consider $f_1 = f_2$ as particular case. If these two fitness parameters are equal, the system (\ref{eqS}) only has two possible fixed points. As we previously mentioned, this case would correspond to the production of neutral mutants from population $x_1$ to $x_2$.
\end{remark}

From now on, we provide numerical results\footnote{The codes used to obtain the results presented in this work are available upon request} of the system (\ref{eqS}). To compute the solutions of this system we have used the \textit{Taylor} software. \textit{Taylor} is an ODE solver generator that reads a system of ODEs and it outputs an ANSI C routine that performs a single step of the numerical integration of the ODEs using the Taylor method. Each step of integration chooses the step and the order in an adaptive way trying to keep the local error below a given threshold, and to minimize the global computational effort \cite{TAYLOR}.

Numerically, we integrate the solution of system (\ref{eqS}) with initial condition $(0,0,1)$. The next Lemma ensures that any point of this orbit satisfies $x_0+x_1+x_2 = 1$. It is used as an accuracy control while integrating the ODE system.
\begin{lemma}
$S = x_0 + x_1 + x_2$ is a first integral of system (\ref{eqS}), that is $\dfrac{dS}{dt}=0$.
\end{lemma}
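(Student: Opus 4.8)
The plan is to compute $\frac{dS}{dt}$ directly by summing the three equations of system (\ref{eqS}) and showing that everything cancels. First I would add $\dot{x_0}$, $\dot{x_1}$ and $\dot{x_2}$ to obtain
\begin{eqnarray*}
\frac{dS}{dt} = f_0Qx_0 + f_0(1-Q)x_0 + f_1Q'x_1 + f_1(1-Q')x_1 + f_2x_2 - \Phi(x_0,x_1,x_2)\,(x_0+x_1+x_2).
\end{eqnarray*}
The key observation is that the replication terms collapse: $f_0Qx_0 + f_0(1-Q)x_0 = f_0x_0$ and $f_1Q'x_1 + f_1(1-Q')x_1 = f_1x_1$, so the positive contributions sum to exactly $f_0x_0 + f_1x_1 + f_2x_2$, which is by definition $\Phi(x_0,x_1,x_2)$. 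Hence $\frac{dS}{dt} = \Phi(x_0,x_1,x_2) - \Phi(x_0,x_1,x_2)\,S = \Phi(x_0,x_1,x_2)\,(1 - S)$.

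Next I would note that this already shows $S \equiv 1$ is invariant, but the Lemma claims the stronger statement that $S$ is a first integral, i.e. $\frac{dS}{dt} \equiv 0$ identically. The cleanest way is to observe that the mutation structure (the matrix $M$ being Markov by columns, as established earlier in the excerpt) forces the column sums of $MD_f$ to be $(f_0, f_1, f_2)$, so that in the matricial form $\dot{\textbf{x}} = MD_f\textbf{x} - \Phi(\textbf{x})\textbf{x}$ the sum of the components of $MD_f\textbf{x}$ equals $\sum_j f_j x_j = \Phi(\textbf{x})$, giving $\frac{dS}{dt} = \Phi(\textbf{x})(1-S)$ again. Since the relevant orbit starts at $(0,0,1)$ where $S(0)=1$, and since $S\equiv 1$ solves the scalar ODE $\dot{S} = \Phi(1-S)$ with that initial datum, uniqueness of solutions gives $S(t)=1$ for all $t$, hence $\frac{dS}{dt}=0$ along that orbit.

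I expect the main (very minor) obstacle to be purely expository: deciding whether to state the Lemma as ``$S$ is conserved on the invariant hyperplane $\{S=1\}$'' versus ``$S$ is a genuine first integral on all of $\R^3$.'' Strictly, $\frac{dS}{dt} = \Phi(\textbf{x})(1-S)$ is not identically zero off the hyperplane, so the honest reading is that $S$ is a first integral \emph{restricted to} the biologically meaningful set $x_0+x_1+x_2=1$, which is exactly the set on which the model is defined and on which the numerical integration takes place. I would therefore present the one-line computation $\frac{dS}{dt} = \Phi(\textbf{x})(1-S)$, then remark that on the orbit through $(0,0,1)$ we have $S=1$ identically, so $\frac{dS}{dt}=0$ there, which is all that is needed for the accuracy-control application described just before the statement.
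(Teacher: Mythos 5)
Your computation is exactly the paper's: summing the three equations, collapsing the replication terms to $\Phi(\mathbf{x})$, and obtaining $\frac{dS}{dt}=\Phi(\mathbf{x})(1-S)$, then using the initial condition $S(0)=1$. You are in fact slightly more careful than the paper in closing the argument (invoking uniqueness for the scalar ODE $\dot S=\Phi(1-S)$ rather than just asserting $1-S=0$), but the approach is the same.
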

\begin{proof}
From the equations (\ref{eqS}), we know that
\begin{eqnarray*}
\begin{array}{lcl}
\dfrac{dS}{dt} &=& \dot{x_0}+ \dot{x_1} + \dot{x_2}\\[5pt]
&=&f_0Qx_0+f_0(1-Q)x_0+f_1Q'x_1+f_1(1-Q')x_1+f_2x_2-\Phi(x_0+x_1+x_2)\\[5pt]
&=& \Phi(x_0,x_1,x_2)(1-(x_0+x_1+x_2)).
\end{array}
\end{eqnarray*}
Since the initial condition is $x_0 (0)= 1$ and $x_1(0)=x_2(0)=0$, $1-(x_0+x_1+x_2) = 0$ and the assertion follows. $\phantom{xxxx}$
$\Box$
\end{proof}

\begin{figure}[!ht]
\centering
\includegraphics[scale=0.6]{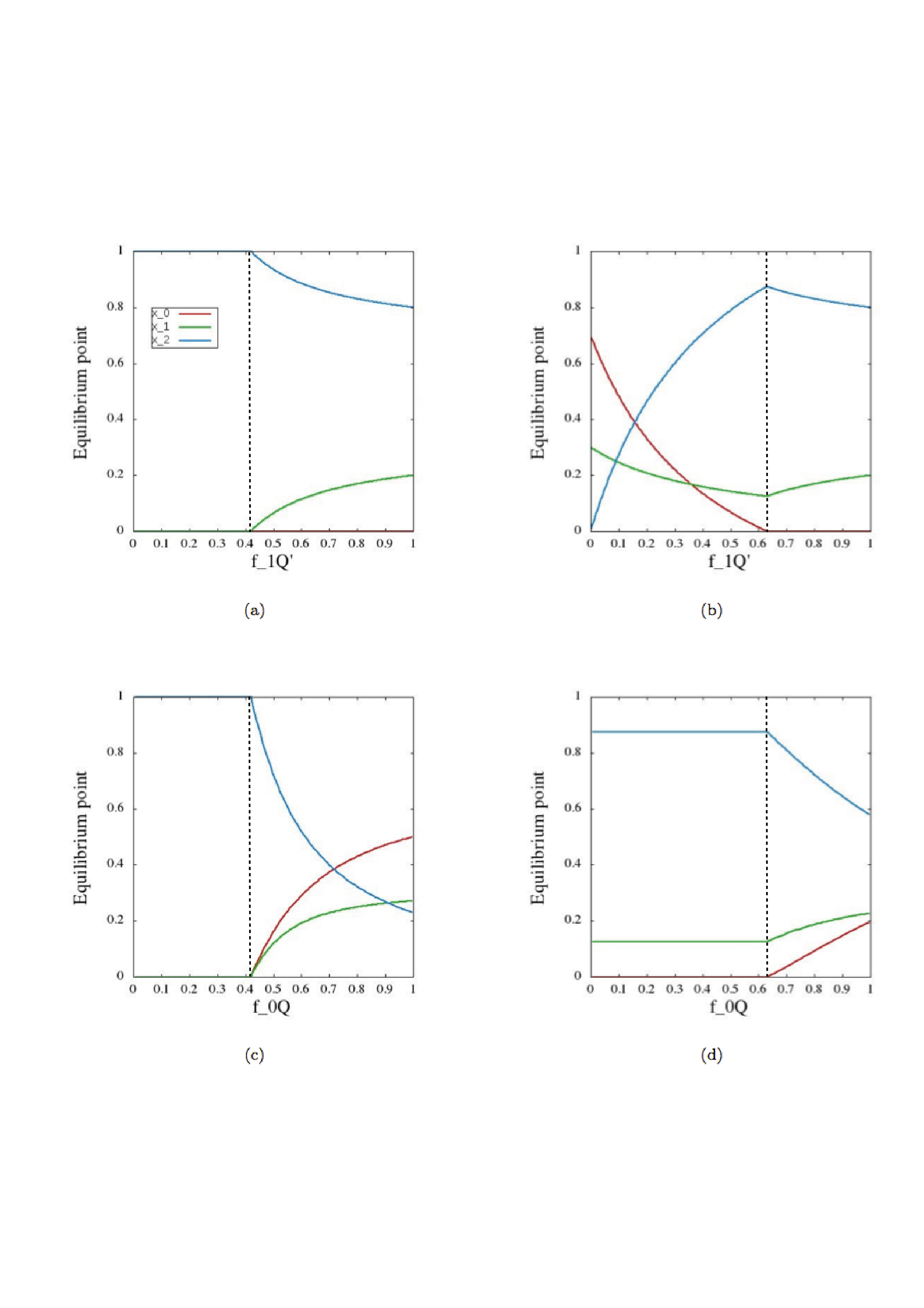}
\vspace{1cm}
\caption{Impact of changing the replication fidelity of the populations $x_0$ and $x_1$, i.e., $f_0Q$ and $f_1Q'$ respectively, in the equilibria of the three cell populations. In (a) and (b) $f_0Q$ has fixed values $0.21$ and $0.63$ respectively; in (c) and (d), $f_1$ has fixed values $0.21$ and $0.63$. Notice that the letter of each panel correspond to sections made in Fig.~\ref{fixpoints}, represented by the white arrows. The vertical dashed lines denote bifurcation values.}
\label{bifurcations}
\end{figure}

As a particular case we consider $Q=0.7$, $Q'=0.3$ and $f_2=0.42$. Then we make $f_0$ to take values from $0.01$ to $\frac{1}{Q}$ and $f_1$ to take values from $0.01$ to $\frac{1}{Q'}$ both with a step of $0.01$. We can not start with $f_0 =0$ or $f_1=0$ because this way $x_0$ or $x_1$ would become extinct. So, we compute the analytical result for all the possible fixed points and, with the iterative method, we integrate the ODE until the distance between the result of the iterate and one of the fixed points is less than an error tolerance previously set, in our case $10^{-16}$. We also fix an upper limit for the time taken by the system to reach a pre-established distance to the fixed point, therefore we consider it does not reach any fixed point if this upper limit is surpassed.


These computations are displayed in Figure~\ref{fixpoints}, which shows the fixed points that are reached by the system depending on the parameters $f_0Q$ and $f_1Q'$. Here each fixed point is indicated with a different colour. Red indicates that the fixed point is $x^* = (0,0,1)$; blue indicates that the fixed point is $x^*=(0,\frac{f_1Q'-f_2}{f_1-f_2},\frac{f_1-f_1Q'}{f_1-f_2})$; and green indicates the fixed point where all of subpopulations coexist. In addition, black lines indicate that none of the fixed points were reached by the system in the given time. Notice that in our analysis we are tuning $f_0Q$ and $f_1Q'$. The decrease of these pair of parameters is qualitatively equivalent to the increase of mutation rates $\mu_0$ and $\mu_1$, respectively, since $\mu_0 = 1-Q$ and $\mu_1 = 1 -Q'$. For instance, going from $f_1Q'=1$ to $f_1Q'=0$ can be achieved increasing $\mu_1$.
\bigskip

Figure~\ref{3d_fix} shows the density of each population at the equilibrium point in the same parameter space of Fig.~\ref{fixpoints}. This analysis allows us to characterize the regions of this parameter space where the density of stable cells $x_0$ is high, and the malignant population $x_2$ remains low. As Sol\'e and Deisboeck \cite{SOLE2001} identified, this scenario can be achieved by increasing $\mu_1$. Furthermore, our results suggest that this behavior is robust to changes in $\mu_0$, whenever $\mu_0$ remains small (i.e., high copying fidelity $f_0 Q$). Notice that an increase of $f_0Q$ (or, alternatively, a decrease of $\mu_0$) makes the population densities of $x_0^*$ and $x_2^*$ to increase and decrease respectively, while for this range of parameters the values of $x_1^*$ remain low. This highlights the relevance of targeted mutagenic therapies against tumor cells, while the most stable cells should keep replicating with a high fidelity. Such a therapy may slow down tumor growth, which could be eventually be cleared due to demographic stochasticity because the small population numbers for $x_2$ found for this combination of parameters (i.e., large $\mu_1$ together with low $\mu_0$).
\begin{figure}[!ht]
\centering
\includegraphics[scale=0.6]{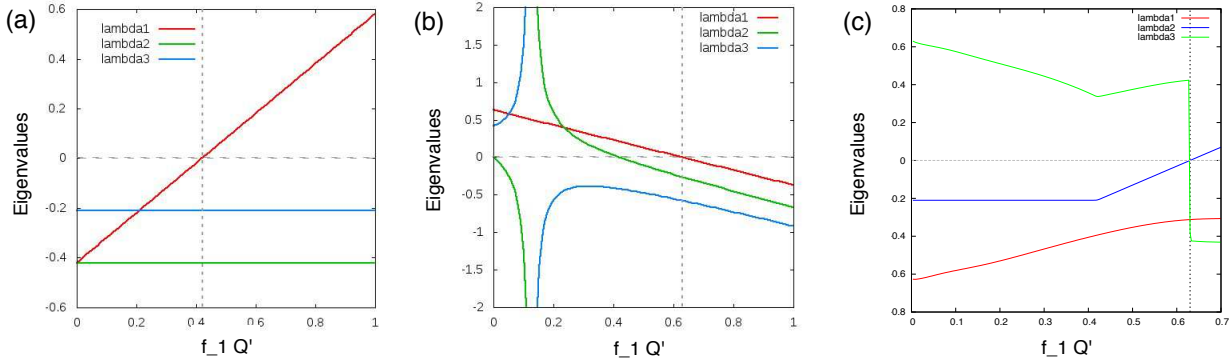}
\caption{(a) Eigenvalues of $L_{\mu}(x^*)$ for $x^*=(0,0,1)$ and $f_0Q = 0.21$, using the range of the arrow (a) in Fig.~\ref{fixpoints}.. (b) Eigenvalues of $L_{\mu}(x^*)$ as a function of $f_1 Q'$ for the fixed point $x^*=(0,x_1^*,x_2^*)$ using $f_0Q=0.63$. (c) Eigenvalues of the differential at the equilibrium point as a function of $f_1Q'$. The vertical dotted line corresponds to the bifurcation value $0.63$.}
\end{figure}

We observe that there is a frontier between the different fixed points reached by the system. The pass from one fixed point to another can be given by a bifurcation. In Fig.~\ref{bifurcations} we appreciate these bifurcations (see dashed vertical lines) more clearly (in the next section we will study the bifurcations in detail). For this purpose, we have set four sections of Fig.~\ref{fixpoints} (white arrows), i.e., we have fixed different values for $f_0$ and $f_1$. In this case these values are $f_0Q=0.21,0.63$ and $f_1Q'=0.21,0.63$.
%

We see the bifurcations of the fixed points of the system in Fig.~\ref{bifurcations}. In $(a)$ and $(b)$ $f_0Q$ has fixed values $0.21$ and $0.63$ respectively, so bifurcations are represented depending on the replication fidelity of $x_1$ in both cases. We notice that for high replication fidelity only $x_1$ and $x_2$ coexist. But in $(b)$ the case is different: if we study this graphic in terms of the mutation rate of $x_1$ we can observe that for high values of it, the system stays at an equilibrium point where $x_0$ is greater. That means that if we make the mutation rate $\mu_1$ higher through therapy it is possible to make the tumor to achieve an equilibrium state where the most unstable cells are near to $0$ and the whole population is mainly dominated by $x_0$.

In cases $(c)$ and $(d)$ we have considered fixed values $f_1Q' = 0.21$ and $f_1Q' = 0.63$ and, therefore, now bifurcations are represented in terms of the replication fidelity of $x_0$. Notice that in both cases the higher is the probability of $x_0$ to stay stable (equivalently, the mutation rate $\mu_0$ of $x_0$ is low) the higher is the population of stable cells at the equilibrium point. This is crucial since they correspond to final scenarios with an important presence of genetically stable cells. Comparing these cases, we observe that, if the mutation rate of $x_1$ is higher, i.e., case (c) ($f_1Q' = 0.21$, i.e., $f_1\mu_1 = 0.49$), the population $x_0$ in the equilibrium point reached is also higher. This suggests the existence of a threshold in the unstability of $x_1$ bringing more stable cells $x_0$ into the final equilibria.

\subsection{Stability analysis and bifurcations}
The linear stability analysis of the fixed points characterized in the previous Section is performed by using the Jacobi matrix:

\begin{eqnarray}
L_\mu(x^*)=
\begin{pmatrix}
f_0Q - \Phi(\textbf{x})-x_0f_0 & -x_0f_1 & -x_0f_2 \\
f_0(1-Q) -x_1f_0 & f_1Q' -\Phi(\textbf{x}) -x_1f_1& -x_1f_2\\
-x_2f_0 & f_1(1-Q') -x_2f_1& f_2 - \Phi(\textbf{x}) -x_2f_2
\end{pmatrix}.
\label{jac}
\end{eqnarray}\\

We are specially concerned with the domain, in the parameter space, where the malignant cells become dominant and the stability of such equilibrium state. Thus, taking $x^* = (0,0,1)$, the Jacobi matrix has the following eigenvalues:
\begin{eqnarray}
\lambda_1 = f_0Q-f_2, \phantom{xx} \lambda_2 = f_1Q'-f_2, \phantom{x} {\rm{and}} \phantom{x} \lambda_3 = -f_2.
\end{eqnarray}
So, $x^*$ is an attractor if the two inequalities, $f_0Q < f_2$ and $f_1Q' < f_2$, are satisfied. From the expression of the eigenvalues we conclude that there is a critical condition for the mutation rates $\mu_0 = 1-Q$ and $\mu_1 = 1-Q'$, given by: $\mu_0^c = 1-\frac{f_2}{f_0}$ and $\mu_1^c = \frac{f_2}{f_1}$, respectively (see also \cite{SOLE2004}). These conditions separate the domain where only $x_2$ remains from the other two cases. From Fig. 5(a) we can confirm that the fixed point $x^* = (0,0,1)$ is an attractor point until this critical condition i.e., the error threshold, (shown by a vertical dotted line) is reached. From that value, the fixed point $x^*$ is unstable, with local $2$-dimensional stable invariant manifold and a $1$-dimensional unstable one.
\begin{figure}[!ht]
\centering
\includegraphics[width=15cm]{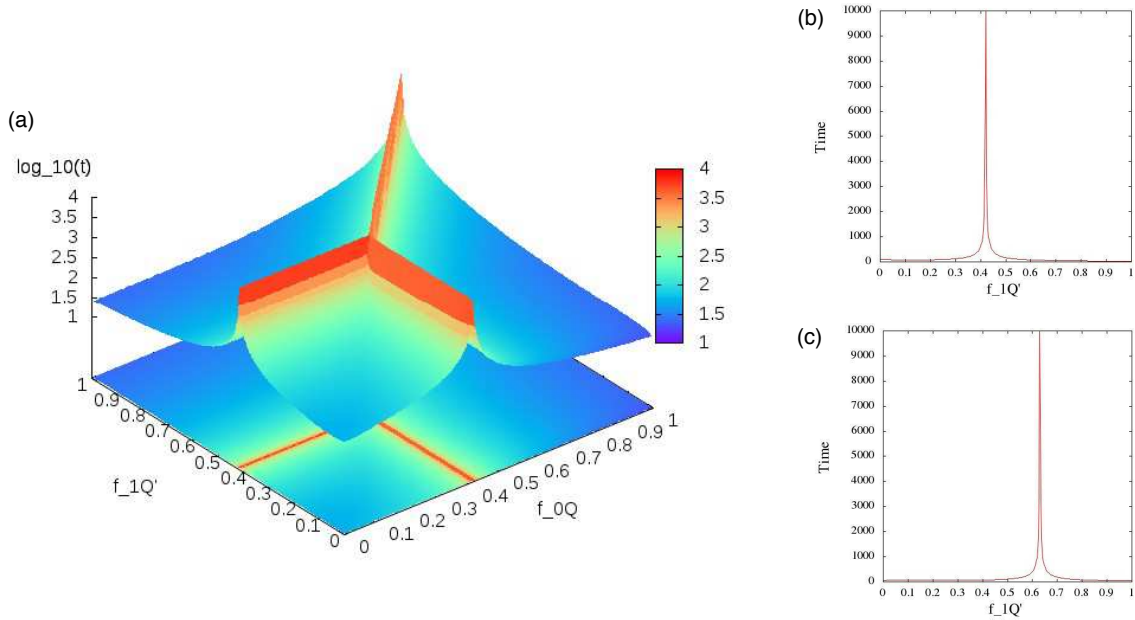}
\caption{Time taken by the system to reach a distance $10^{-2}$ to a fixed point in the parameter space ($f_0Q$,$f_1Q'$). Time in (a) is logarithmic represented in base 10 but in sections (b) and (c) it is represented in real scale due to appreciate the actual velocity of the system.}
\label{time}
\end{figure}
\begin{eqnarray}
\lambda_1 = f_0Q-f_1Q', \phantom{xx} \lambda_2 = \frac{f_1(f_2-f_1Q')}{f_1-f_2}, \phantom{x} {\rm{and}} \phantom{x} \lambda_3 = \frac{2f_1f_2Q'-f_1^2Q'-f_2^2}{f_1-f_2}. 
\end{eqnarray}

If we evaluate the stability of the fixed point $x^*=(0,\frac{f_1Q'-f_2}{f_1-f_2},\frac{f_1-f_1Q'}{f_1-f_2})$, where populations $x_1$ and $x_2$ coexist, we get the following eigenvalues from the Jacobi matrix:


Notice that, as we mention in Remark~\ref{f1f2}, when $f_1 = f_2$, such fixed point does not exist, hence the eigenvalues of the Jacobi matrix do not provide any information. This can be appreciated in Fig. 5(b), when $f_1Q' = 0.126$, thus $f_1 = f_2 = 0.42$. 

Finally, we want to study the stability of the fixed point where all populations coexist. An analytical expression for the eigenvalues exists, since we have the Jacobi matrix and the analytical expression for the fixed point itself. But they have really complicated expressions and, even more, we are considering $n=1$. This means that the greater $n$, the analytical expressions for the eigenvalues are more complicated to find. This is the reason why it is interesting to compute them numerically.

To compute the eigenvalues of this $3$-dimensional matrix we proceed in the following way. This simplified procedure turns to be quite fast in our case (dimension $3$) but may not be applicable for higher dimensions. It works as follows: 
\begin{itemize}
\item We first apply the \emph{power method} to compute an approximation for the eigenvalue of maximal modulus. As it is known, it is based on the idea that the sequence 
$v_{k+1}=A v_k$ should behave for large values of $k$ as the direction of the eigenvector associated to the largest (in modulus) eigenvalue $\lambda_{\max}$ of $A$. To compute it one starts from an arbitrary initial vector $v_0$  (in our case, for instance, $(1/3, 1/3, 1/3)$, and computes $v_{k+1}=A v_k$. Provided the difference between the largest eigenvalue and the second largest eigenvalue (in modulus, always) is not too small, the quotients of the components of $v_{k+1}, v_{k}$, that is $v_{k+1}^{(j+1)}/v_{k+1}^{(j)}$, converge to $\lambda_{\max}$. To avoid problems of overflow one normalizes $v_{k+1}$, i.e. $w_{k+1}=v_{k+1}/\parallel v_{k+1}\parallel$, at any step.
Problems of convergence appear when the two largest eigenvalues of $A$ are close.

\item We apply the same method to $A^{-1}$ to obtain the smallest eigenvalue of $A$, since $\lambda$ is eigenvalue of $A$ iff $\lambda^{-1}$ is eigenvalue of $A^{-1}$.
To compute $A^{-1}$ we have used its $QR$-decomposition, which writes $A$ as the product of two matrices: an orthogonal matrix $Q$ and an upper triangular matrix $R$, i.e., 
$A = QR$.

\item Provided $\lambda_{\max}$, $\lambda_{\min}$ are accurate enough, the third eigenvalue is determined from the value of the determinant of the matrix $A$, that we have derived from its $QR$-decomposition.  
   
\end{itemize}

\begin{figure}[!ht]
\centering
\includegraphics[scale=0.48]{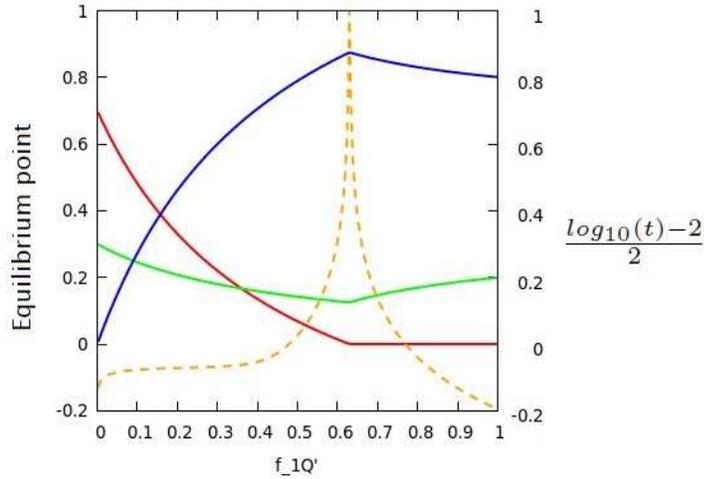}
\caption{Transient times dependence on $f_1 Q'$. The dashed line corresponds to $\frac{\log_{10}(t)-2}{2}$, and the solid lines are the equilibrium densities of $x_0$ (red), $x_1$ (green) and $x_2$ (blue) for $f_0Q = 0.63$. Notice that time is maximum at the bifurcation value.}
\label{merge}
\end{figure}

We apply this procedure to the study of possible bifurcations in the stability of the equilibrium points obtained when we do not have analytic expressions for the associated eigenvalues. To show it, we fix a value for $f_0 Q$ and move $f_1 Q'$. We have chosen a value for $f_0 Q$ corresponding to the line $(b)$ of the diagram in Fig~\ref{fixpoints}. Observe that, when moving at the green zone our equilibrium point is of the form $(x_0^*, x_1^*, 1-x_0^*-x_1^*)$ while it is of the form $(0,x_1^*,1-x_1^*)$ when we move along the blue one. For each equilibrium point, whose components depend on $f_1 Q'$, we compute its differential matrix and its associated eigenvalues. These eigenvalues, computed numerically as mentioned above, are plotted in Fig. 5(c). They are all three real, and thus no transient oscillations are found in the dynamics. Observe that there is an interchange of the number of positive and negative eigenvalues around the bifurcation value $0.63$, but no change in their stability. They are always unstable since we have at least one positive eigenvalue. 

It is interesting to highlight the change in the geometry of such equilibrium point: for values of $f_1 Q'$ under the bifurcation value $0.63$ its invariant stable manifold is $2$-dimensional and its invariant unstable manifold $1$-dimensional; on the contrary, after the bifurcation the associated  dimensions of the invariant manifolds are exchanged. In both cases, the orbit starting at initial conditions $(1,0,0)$ finishes at the stable manifold of that equilibrium point. This is why, although being a unstable fixed point, the trajectory under analysis asymptotically travels towards the fixed point.

\subsection{Transient times}
In this section we analyze the time taken by the system to reach a given (small) distance to one of the fixed points found in section \ref{se:eqpoints}. Typically, the behaviour of transient times change near bifurcation threshold, and, particularly for our system, we are interested in possible changes in transients due to changes in mutation rates. These phenomena could be relevant in patient response under mutagenic therapy.
%
%

Figure 7 shows the same as Fig.~\ref{fixpoints}, in the sense that we tune the same parameters of the system, but now we compute the  transient times. Here we use the same method to integrate the ODE system, but taking into account only the time taken to reach a distance $10^{-2}$ of the equilibrium point. Notice that if we observe the contour lines represented at the bottom of Fig.~\ref{time} we have the same as in Fig.~\ref{fixpoints}. This means that the ``time to equilibrium" of the system increases when we are near bifurcation points, which are represented with black lines in Fig.~\ref{fixpoints}.

When slightly modifying the mutation rate of $x_1$ near the error threshold, we see that the time taken by the tumor to reach an equilibrium state is significantly lower. In particular, when increasing the mutation rate, not only time decreases but also we see that the tumor reaches an equilibrium point where the stable cells population is higher. This can be appreciated in Fig.~\ref{merge}, where red, green and blue lines represent $x_0$, $x_1$ and $x_2$ respectively and the dashed, orange line represents the transient time, which is rescaled to be able to relate it with the corresponding equilibrium point. In case of possible mutagenic therapies directly targeting the most unstable cells, our results reveal that pushing mutation rates of unstable cells beyond the error threshold (where $x_0$ population starts increasing and $x_2$ population starts decreasing) the achievement of the equilibrium point would be very fast, and no further increase of mutation rates may be needed to achieve a faster response (notice that the transient time for $f_1 Q' \lessapprox 0.45$ in Fig.~\ref{time} does not significantly decrease for smaller $f_1 Q'$ values). In other words, when rescaling Fig.~\ref{time} (b) in terms of logarithm in base 10 (as we can see in Fig.~\ref{merge}) if we increase the mutation rate of $x_1$, time decreases faster than an exponential function, which is interesting result since it implies an equilibrium scenario with a dominant presence of $x_0$ that can be reached in a short time.

\section{Conclusions}
In this article we study an ODE system modeling the behavior of a population of tumor cells with a mean-field quasispecies model introduced by Sol\'e and Deisboeck \cite{SOLE2001}. Thus, in our simplified model we have assumed the following: the model does not consider stochasticity and does not take into account spatial correlations between cells as well. Also, we do not consider cell death (due to e.g., the immune system), otherwise we model competition in terms of replication and mutation using the quasispecies framework \cite{Eigen1971,Eigen1979}.

First, we have found the fixed points of the system analytically and we have studied their stability both analytically and numerically. We have also characterized the bifurcations between the different fixed points. We conclude that, depending on the parameters, the system can reach different equilibrium states one of them involving low populations of tumor cells while keeping large populations of genetically stable cells. This scenario can be achieved by increasing the level of genetic instability conferred through the mutation rate $\mu_1$ of the mutator-phenotype population $x_1$.
If this mutation rate exceeds an error threshold, the replication rate of the more malignant subpopulation $x_2$ is reduced to a point where it has no longer competitive advantage.

Further analysis of the effects of mutation rate $\mu_1$ on the dynamics of the system have allowed us to characterize a transcritical bifurcation. Under such a bifurcation, the time taken by the system to reach the desired equilibrium state is shown to drastically decrease with slight changes in the mutation rate $\mu_1$ near the error threshold. This result can give us a clue about how medication and therapy may affect the tumor behavior in case of direct mutagenic therapies against tumor cells. In other words, it is possible to modify the mutation rate of the cells through therapy, and we have seen that if we slightly increase the mutation rate of $x_1$ near the error threshold we can obtain an equilibrium point with a dominant population of stable cells rapidly. Such an scenario could become relevant since populations of tumor cells could be decreased with mutagenic therapy, and such populations could be more prone to extinction due to stochastic fluctuations inherent to small populations sizes. 

Further research should explicitly consider stochastic fluctuations by transforming the differential equations in stochastic differential equations e.g., Langevin equations. Also, the addition of spatial correlations would indicate if our results are still present in theoretical models for solid tumors. In this sense, it would be interesting to analyze the transient times tied to increasing mutation values considering space in an explicit manner. It is important to notice that transient times are extremely important in terms of responses velocities after therapy. Different causes and mechanisms have been suggested to be involved in transient duration, which have been especially explored for ecological systems \cite{transient}. For instance, it is known that space can involve very long transients until equilibrium points are reached. This phenomenon could be investigated extending the model we investigated by means of partial differential equations.

\section*{Acknowledgements}
J.T.L. has been partially supported by the Spanish MICIIN/FEDER grant MTM2012-31714 and by the Generalitat de Catalunya number 2014SGR-504. J.S. has been funded by the Fundaci\'on Bot\'in.



\end{document}